\newtheorem{prop}{Proposition}[section]
\newtheorem{corollary}[prop]{Corollary}
\newtheorem{lemma}[prop]{Lemma}
\newtheorem{thm}[prop]{Theorem}
\newtheorem{theorem}[prop]{Theorem}
\newtheorem{corol}[prop]{Corollary}
\theoremstyle{definition}
\newtheorem{rem}[prop]{Remark}
\newtheorem{definition}[prop]{Definition}
\begin{document}

\title{The regulator dominates the rank}
\author{{F}abien {P}azuki}
\address{Fabien Pazuki. University of Copenhagen, Institute of Mathematics, Universitetsparken 5, 2100 Copenhagen \O, Denmark, and Universit\'e de Bordeaux, IMB, 351, cours de la Lib\'eration, 33400 Talence, France.}
\email{fpazuki@math.ku.dk}

\thanks{We believe that the results presented here would have been to the liking of Alexey Zykin, who is deeply missed. In loving memory of Alexey and Tanya. We thank Pascal Autissier and Marc Hindry for interesting conversations. We thank the referee for useful feedback. We thank the Swedish Research Council under grant no. 2016-06596, as this work was finalized while the author was in residence at Institut Mittag-Leffler in Djursholm, Sweden during the fall of 2021. The author is supported by ANR-17-CE40-0012 Flair and ANR-20-CE40-0003 Jinvariant.}
\maketitle

\noindent \textbf{Abstract.}
After noticing that the regulator of a number field dominates the rank of its group of units, we bound from below the regulator of the Mordell-Weil group of elliptic curves over global function fields of characteristic $p\geq5$. The lower bound is an increasing function of the rank and of the height. This partially answers Question 7.1 and Question 7.2 of \cite{AHP}. 

{\flushleft
\textbf{Keywords:} Heights, elliptic curves, regulators, Mordell-Weil.\\
\textbf{Mathematics Subject Classification:} 11G50, 14G40. }

\begin{center}
---------
\end{center}

\thispagestyle{empty}

\maketitle

\section{Introduction}\label{section def}

Regulators of number fields and regulators of Mordell-Weil groups of abelian varieties have attracted a lot of attention, both for their own sake, and for the role they play in the Class Number Formula and in the strong form of the Birch and Swinnerton-Dyer conjecture, respectively. When studying families of number fields or families of abelian varieties, it is sometimes necessary to estimate the size of the regulator in terms of easier invariants, like the discriminant and degree of the number fields, or like the height of the abelian varieties and the rank of their Mordell-Weil group, respectively. In this note, we propose a new lower bound on the regulator of elliptic curves defined over global function fields. This lower bound is an increasing function of the rank of the elliptic curve (when the height is big enough), which is a new phenomenon, and which mirrors a similar situation taking place between the regulator of a number field and its rank of units. We describe both results in the rest of this introduction.

\subsection{Regulators and ranks of units of number fields}

Let us start with the following theorem, which has been an important motivation for this work. In the sequel, if $F$ is a number field, we denote by $d$ its degree over $\mathbb{Q}$. Let $r_1$ be the number of real embeddings of $F$, and $r_2$ be the number of pairs of complex conjugate embeddings of $F$. The group of units of $F$ is a $\mathbb{Z}$-module of finite rank, we denote this rank by $r_F$. Let $R_F$ be the regulator of $F$, and let $w_F$ be the number of roots of unity in $F$.

\begin{thm}(Friedman, \cite{Fri} page 620, Corollary)
Let $F$ be a number field. Then
\begin{equation}\label{ineq1}
\frac{R_F}{w_F}\geq 0.0031\exp(0.241d+0.497r_1).
\end{equation}
\end{thm}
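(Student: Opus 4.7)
The plan is to start from the analytic class number formula
$$\operatorname{Res}_{s=1}\zeta_F(s) \;=\; \frac{2^{r_1}(2\pi)^{r_2}\, h_F\, R_F}{w_F\, \sqrt{|d_F|}},$$
which rewrites the quantity of interest as
$$\frac{R_F}{w_F} \;=\; \frac{\operatorname{Res}_{s=1}\zeta_F(s)\,\sqrt{|d_F|}}{2^{r_1}(2\pi)^{r_2}\, h_F}.$$
Since $h_F \geq 1$, it suffices to bound from below the numerator divided by $2^{r_1}(2\pi)^{r_2}$. This combination is encoded in the completed zeta function $\xi_F(s) = |d_F|^{s/2}\Gamma_{\mathbb{R}}(s)^{r_1}\Gamma_{\mathbb{C}}(s)^{r_2}\zeta_F(s)$, which is meromorphic on $\mathbb{C}$, satisfies $\xi_F(s)=\xi_F(1-s)$, and has simple poles at $s=0,1$ with residue exactly the quantity we want (up to the factors $\Gamma_{\mathbb{R}}^{r_1}\Gamma_{\mathbb{C}}^{r_2}$ evaluated at $s=1$).

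Next, I would invoke Weil's explicit formula in the Poitou--Serre formulation, refined \emph{\`a la} Zimmert. For a suitable even Schwartz test function $f$ with non-negative Fourier transform $F$, this formula reads schematically
$$\sum_{\rho}F(\gamma_{\rho}) \;=\; \bigl(\tfrac{1}{2}\log|d_F|\bigr)F(0) \;+\; (\text{archimedean integrals involving } r_1, r_2) \;-\; \sum_{\mathfrak{p},m}\tfrac{\log N\mathfrak{p}}{N\mathfrak{p}^{m/2}}f(m\log N\mathfrak{p}) \;+\; (\text{contribution of the poles } s=0,1),$$
where the sum on the left runs over non-trivial zeros $\rho = \tfrac12 + i\gamma_\rho$ of $\zeta_F$. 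Choosing $f\geq 0$ on the support of the prime power sum and $F\geq 0$ everywhere makes the zero sum and the prime sum both of definite sign, so they can be dropped in the appropriate direction. The polar contribution on the right then delivers a term proportional to $\log(R_F/w_F)$, and the archimedean integrals contribute a linear combination of $d = r_1 + 2r_2$ and $r_1$, since the Mellin transforms of $\Gamma_{\mathbb{R}}'/\Gamma_{\mathbb{R}}$ and $\Gamma_{\mathbb{C}}'/\Gamma_{\mathbb{C}}$ against $f$ give independent coefficients.

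Collecting everything yields an inequality of the form
$$\log\!\bigl(R_F/w_F\bigr) \;\geq\; A(f)\, d \;+\; B(f)\, r_1 \;+\; C(f),$$
valid for every admissible test function $f$. The final step is to optimize: running through a one-parameter family of test functions (for instance the Odlyzko cut-off $f(x)=\max(0,1-|x|/T)^{2}$ convolved with itself, or the variant used by Friedman) and selecting parameters that balance the archimedean gain against the discarded prime sum produces the explicit constants $0.241$, $0.497$, and $0.0031$. I expect the main obstacle to be precisely this numerical optimization: the coefficients $A(f)$ and $B(f)$ involve digamma integrals that must be computed carefully, and obtaining constants strong enough to be non-trivial for all $F$ simultaneously is where Friedman's technical contribution lies.
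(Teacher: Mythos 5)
The paper gives no proof of this statement at all: it is quoted verbatim from Friedman's paper (the Corollary on page 620 of \cite{Fri}), so the only fair comparison is with Friedman's actual argument. Your sketch does not reproduce that argument, and it contains a fatal error at the very first step. From the class number formula you write $R_F/w_F = \operatorname{Res}_{s=1}\zeta_F(s)\sqrt{|d_F|}/(2^{r_1}(2\pi)^{r_2}h_F)$ and then claim that $h_F\geq 1$ reduces the problem to lower-bounding the numerator. This is backwards: since $h_F$ sits in the denominator, $h_F\geq 1$ only yields an \emph{upper} bound on $R_F/w_F$ in terms of the residue. To get a lower bound along these lines you would need an upper bound on $h_F$ together with a lower bound on $\operatorname{Res}_{s=1}\zeta_F$, and neither is available in the required effective strength (the residue lower bound is exactly the Brauer--Siegel/Siegel-zero problem). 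The whole difficulty of the theorem is precisely to decouple $R_F$ from $h_F$; Zimmert's insight, which Friedman refines, is to work with the partial zeta function of the principal ideal class (equivalently, a Hecke-type integral over the unit lattice), whose residue at $s=1$ is $2^{r_1}(2\pi)^{r_2}R_F/(w_F\sqrt{|d_F|})$ with \emph{no} class number, and whose Dirichlet coefficients are non-negative. Friedman derives from this an exact formula expressing $R_F/w_F$ as a sum of non-negative archimedean terms over principal ideals, and the lower bound comes from retaining only the contribution of the unit ideal and optimizing the test function.

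The second step of your sketch also fails on its own terms: Weil's explicit formula in the Poitou--Serre formulation involves only the \emph{locations} of the poles of $\zeta_F$ at $s=0,1$ (through the values $\Phi(0)+\Phi(1)$ of the transform of the test function), never their residues. So no term ``proportional to $\log(R_F/w_F)$'' appears on either side, and positivity arguments with such test functions produce Odlyzko-type lower bounds on $\log|d_F|$ in terms of $d$ and $r_1$ --- a genuinely different (and here insufficient) conclusion. The exponential gain in $d$ and $r_1$ in Friedman's corollary does come from archimedean gamma-factor integrals, as you anticipate, but they enter through the inverse Mellin transform of $\Gamma_{\mathbb{R}}(s)^{r_1}\Gamma_{\mathbb{C}}(s)^{r_2}$ evaluated in the unit-ideal term of the exact regulator formula, not through an explicit-formula polar term. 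As written, the proposal cannot be repaired without replacing both of its main steps.
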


By Dirichlet's unit theorem, we know that $r_F=r_1+r_2-1$. We also know that $d=r_1+2r_2$. This has the following easy consequence when used in inequality (\ref{ineq1}).

\begin{corol}
Let $F$ be a number field. Then
\begin{equation}\label{ineq2}
R_F\geq 0.0062\exp(0.241 r_F)
\end{equation}
\end{corol}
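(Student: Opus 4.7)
The plan is to derive the corollary directly from inequality \eqref{ineq1} by using the two elementary relations $d=r_1+2r_2$ and $r_F=r_1+r_2-1$, together with the trivial fact that $w_F\geq 2$ for every number field $F$ (since $\pm 1$ are always roots of unity in $F$).

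First, I would rewrite the exponent on the right-hand side of \eqref{ineq1} in terms of $r_1$ and $r_2$ alone. Substituting $d=r_1+2r_2$ gives
\begin{equation*}
0.241\, d+0.497\, r_1=0.738\, r_1+0.482\, r_2.
\end{equation*}
Second, I would compare this expression to $0.241\, r_F$. Using $r_F=r_1+r_2-1$, the difference is
\begin{equation*}
(0.738\, r_1+0.482\, r_2)-0.241\, r_F = 0.497\, r_1+0.241\, r_2+0.241,
\end{equation*}
which is clearly nonnegative (indeed at least $0.241$) because $r_1,r_2\geq 0$. Hence $0.241\, d+0.497\, r_1\geq 0.241\, r_F$, and the monotonicity of $\exp$ yields
\begin{equation*}
\exp(0.241\, d+0.497\, r_1)\geq \exp(0.241\, r_F).
\end{equation*}

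Finally, I would plug this into \eqref{ineq1} and multiply by $w_F$ to obtain
\begin{equation*}
R_F\geq 0.0031\, w_F\exp(0.241\, r_F)\geq 0.0062\exp(0.241\, r_F),
\end{equation*}
where the last step uses $w_F\geq 2$. This gives \eqref{ineq2}. There is no real obstacle here: the corollary is a straightforward repackaging of Friedman's estimate, obtained by trading the dependence on $(d,r_1)$ for the weaker but more intrinsic dependence on $r_F$, at the cost of halving the constant through the bound $w_F\geq 2$.
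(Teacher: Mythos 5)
Your proof is correct and matches the paper's intended argument: the paper gives no explicit proof, stating only that the corollary is an easy consequence of \eqref{ineq1} via $d=r_1+2r_2$ and $r_F=r_1+r_2-1$, and the constant $0.0062=2\times 0.0031$ confirms that the bound $w_F\geq 2$ is used exactly as you do.
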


So the story begins with the following fact given by inequality (\ref{ineq2}): the regulator $R_F$ of a number field $F$ dominates the rank $r_F$ of its group of units. This triggers questions about other contexts, for instance: to what extent would the regulator of the Mordell-Weil group of an abelian variety over a global field dominate the rank of this Mordell-Weil group? 

\subsection{Elliptic curves and ranks of Mordell-Weil groups}

Our goal is to prove that the regulator $\mathrm{Reg}(E/K)$ of an elliptic curve $E$ defined over a function field $K$ of characteristic $p\geq 5$ dominates the rank of its Mordell-Weil group. In doing so we partially answer Question 7.1 and Question 7.2 of \cite{AHP} in the case where $K=\mathbb{F}_q(\mathcal{C})$ is a function field of characteristic $p\geq 5$, where $\mathcal{C}$ is a smooth projective and geometrically connected curve defined over its constant field $\mathbb{F}_q$ and of genus $g\geq0$. Note that the rank of elliptic curves over function fields of positive characteristic is not bounded \cite{Ulm, Griff}, hence this improvement is non-trivial. Let us state the result.


\begin{theorem}\label{reg>rank}
Let $K=\mathbb{F}_q(\mathcal{C})$ be a function field of characteristic $p\geq 5$ and genus $g$. Let $E$ be an elliptic curve over $K$ of discriminant $\Delta(E/K)$, of trace zero, and let $p^s$ denote the inseparability degree of the $j$-map of $E$. Let $r$ denote the rank of $E(K)$. There exists a positive real number $c_0=c_0(q,g,p^s)$ such that 
\begin{equation}\label{minoration}
\mathrm{Reg}(E/K)\geq \Big(c_0 \log 12h(E)\Big)^r,
\end{equation}
where $h(E)=\frac{1}{12}\deg \Delta(E/K)$, and the inequality holds with the explicit value $$c_0=\Big(p^{2s} 12 \sqrt{q} (\log q)^2 (5g+9)10^{15.5+23g}\Big)^{-1}.$$
\end{theorem}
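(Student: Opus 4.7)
My plan is to derive $\mathrm{Reg}(E/K)\geq(c_0\log(12h(E)))^r$ by combining two complementary function-field ingredients: a linear lower bound $\hat h(P)\geq c_1 h(E)$ for non-torsion points, and a sub-linear upper bound $r\leq c_2\, h(E)/\log(12h(E))$ for the rank. The logarithm in the conclusion is exactly the ratio between these two estimates.

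First, I would reduce the problem to the first successive minimum of the Mordell-Weil lattice. Regard $\Lambda=E(K)/E(K)_{\mathrm{tors}}$ as a Euclidean lattice of rank $r$ in the norm $\|P\|=\sqrt{\hat h(P)}$, so that $\mathrm{Reg}(E/K)=\mathrm{covol}(\Lambda)^2$. If $\lambda_1\leq\cdots\leq\lambda_r$ are its successive minima, Minkowski's second theorem together with the bound $\gamma_r\leq r$ on Hermite's constant yields
\begin{equation*}
\mathrm{Reg}(E/K)\;\geq\;\gamma_r^{-r}\prod_{i=1}^r\lambda_i^2\;\geq\;r^{-r}\,\lambda_1^{2r}\;=\;r^{-r}\,\Bigl(\min_{0\neq P\in\Lambda}\hat h(P)\Bigr)^{r}.
\end{equation*}
So the theorem reduces to establishing $\hat h(P)\geq c_1 h(E)$ for all non-torsion $P$ and $r\leq c_2 h(E)/\log(12h(E))$, with explicit constants $c_1,c_2$ depending only on $q,g,p^s$; then $c_0=c_1/c_2$ will realise the claim.

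For the height lower bound, the trace-zero hypothesis ensures genuine non-isotriviality, and a Hindry--Silverman-type argument in positive characteristic --- using Tate's uniformisation at multiplicative places, a volume/counting bound at the remaining ones, and the inseparability degree $p^s$ of $j$ to control potentially supersingular behaviour --- produces an explicit $c_1(q,g,p^s)$. The factor $p^{2s}$ in $c_0$ arises here, typically through a Frobenius-descent that rules out a very small height inherited from a purely inseparable isogeny. For the rank bound, a Brumer-style counting argument via reduction modulo a well-chosen good place, with constants tracked in the style of \cite{AHP}, produces an explicit $c_2(q,g,p^s)$. Assembling everything gives
\begin{equation*}
\mathrm{Reg}(E/K)\;\geq\;\Bigl(\tfrac{c_1\,h(E)}{r}\Bigr)^{r}\;\geq\;\Bigl(\tfrac{c_1}{c_2}\log(12h(E))\Bigr)^{r}\;=\;\bigl(c_0\log(12h(E))\bigr)^{r}.
\end{equation*}

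The main obstacle is the height lower bound: producing a clean linear estimate $\hat h(P)\geq c_1 h(E)$ with a fully explicit $c_1(q,g,p^s)$, free of any hidden dependence on $r$, $E$, or $h(E)$, requires a delicate local-global decomposition $\hat h(P)=\sum_v\lambda_v(P)$ together with careful place-by-place analysis at additive and potentially supersingular primes. The rank bound is comparatively cleaner, and the passage from a first-minimum bound to a regulator bound via Minkowski is essentially formal once the factor $r^{-r}$ is absorbed through the rank estimate, as above. The explicit form of $c_0$ stated in the theorem should then follow by bookkeeping of the two constants $c_1$ and $c_2$ through their respective proofs.
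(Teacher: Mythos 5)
Your plan follows essentially the same route as the paper: Minkowski's second theorem reduces the regulator bound to the first minimum, giving $\mathrm{Reg}(E/K)\geq r^{-r}\bigl(\min_{P}\widehat{h}_E(P)\bigr)^r$; the linear height lower bound $\widehat{h}_E(P)\geq p^{-2s}10^{-15.5-23g}h(E)$ is exactly Theorem 6.1 of \cite{AHP}; and the rank bound $r\leq \frac{n_E}{\log n_E}\sqrt{q}(\log q)^2(5g+9)$ together with $n_E\leq 12h(E)$ yields the logarithm, so your assembly $\mathrm{Reg}\geq (c_1h(E)/r)^r\geq (c_0\log 12h(E))^r$ is the paper's. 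The only points where your sketch deviates in substance are that the rank bound is obtained analytically --- via Brumer's explicit formula with the F\'ejer kernel bounding the \emph{analytic} rank, transferred to the algebraic rank by Tate's inequality $r\leq r_{\mathrm{an}}$, not by a reduction-and-counting argument --- and that the small-conductor case $n_E\leq e$ (where $\log n_E$ is useless) needs the separate elementary bound $r\leq n_E+4g-4$.
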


We can now deduce the following corollary, which can be seen as a refined Northcott property for the regulators of elliptic curves over function fields in characteristic $p\geq5$.

\begin{corollary}\label{Northcott}
Let $K=\mathbb{F}_q(\mathcal{C})$ be a function field of characteristic $p\geq5$ and genus $g$. The set of elliptic curves of trace zero over $K$, with positive rank, bounded inseparability degree and bounded regulator is finite.
\end{corollary}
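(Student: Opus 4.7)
The plan is to deduce Corollary \ref{Northcott} directly from Theorem \ref{reg>rank} by showing that the two bounded quantities (inseparability degree and regulator) combined with positivity of the rank force the height $h(E)$ to be bounded, and then invoking a Northcott-type finiteness result for elliptic curves of trace zero over $K$.

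First I would fix uniform bounds $p^s \leq S$ and $\mathrm{Reg}(E/K) \leq B$ on the family under consideration. Since $c_0(q,g,p^s)$ is a decreasing function of $p^s$ (the expression in the theorem has $p^{2s}$ in the denominator), fixing $q$ and $g$ and the bound $S$ yields a uniform positive constant $c_0^\ast = c_0(q,g,S) > 0$ such that Theorem \ref{reg>rank} gives
\[
\bigl(c_0^\ast \log 12 h(E)\bigr)^{r} \leq \mathrm{Reg}(E/K) \leq B
\]
for every $E$ in the family, where $r \geq 1$ by the positive rank hypothesis. Setting $x := c_0^\ast \log 12 h(E)$, the inequality $x^r \leq B$ with $r \geq 1$ implies $x \leq \max(1,B)$: indeed if $x \leq 1$ there is nothing to show, and if $x > 1$ then $x \leq x^r \leq B$. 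Consequently
\[
h(E) \leq \frac{1}{12}\exp\!\bigl(\max(1,B)/c_0^\ast\bigr),
\]
which is an explicit bound depending only on $q$, $g$, $S$ and $B$.

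Once the height $h(E) = \tfrac{1}{12}\deg \Delta(E/K)$ is bounded, I would close the argument by applying the Northcott property for elliptic curves over the fixed function field $K$: the set of $K$-isomorphism classes of elliptic curves of trace zero with bounded discriminant degree is finite. The trace zero hypothesis is precisely what rules out isotrivial families, which could otherwise produce infinitely many isomorphism classes of bounded height via constant-field twists; for non-isotrivial curves, a minimal Weierstrass model has coefficients in a finite-dimensional $\mathbb{F}_q$-vector space of sections of bounded degree on $\mathcal{C}$, and there are only finitely many such models up to $K$-isomorphism.

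The only delicate point is the invocation of the function-field Northcott statement in the last step, which must be formulated with care so that the trace zero hypothesis genuinely eliminates the infinite isotrivial families; assuming the standard version in the literature, the rest of the argument is a short manipulation of the inequality in Theorem \ref{reg>rank}.
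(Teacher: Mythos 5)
Your argument is correct and follows essentially the same route as the paper: both deduce a uniform bound on $h(E)$ from inequality (\ref{minoration}) via the same dichotomy on whether $c_0\log 12h(E)$ exceeds $1$, and then conclude by finiteness of trace-zero elliptic curves of bounded height over $K$. For that final step, which you rightly flag as the delicate point, the paper cites Moret-Bailly \cite{MB85}, Th\'eor\`eme 4.6 page 236, which gives exactly the needed statement that bounded height implies finiteness when the constant field is finite.
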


\begin{rem}
Under the ABC conjecture, the BSD conjecture, and the GRH, one obtains an inequality for elliptic curves over number fields similar to the inequality (\ref{minoration}) using \cite{Mes82}. This would lead to an improvement of Theorem 4 page 1124 of \cite{Paz16}, as the regulator would bound the rank from above.
\end{rem}

The rest of the text presents a proof of Theorem \ref{reg>rank} and of Corollary \ref{Northcott}. After giving the prerequisites in the next section, we prove inequality (\ref{minoration}). The proof relies on the Minkowski successive minima inequality, combined with a lower bound on the canonical height of non-torsion points on elliptic curves. This is not enough, though, we need extra input to obtain the correct dependance in the rank. We are then able to give an explicit estimate on the analytic rank in Lemma \ref{BruPaz}, following Brumer's work, and transfer this estimate on the algebraic rank via Tate's work. The estimate is of sufficient quality to yield the result.

\section{Definitions and prerequisites}

Here we gather the basic definitions --function fields, heights, regulators of elliptic curves-- and the key results used later in the proof of Theorem \ref{reg>rank}.

\subsection{Function fields} Let $K=k(\mathcal{C})$ be the function field of a smooth projective and geometrically connected curve $\mathcal{C}$ defined over its constant field $k$ and of genus $g\geq0$. Let $M_K$ stand for a complete set of inequivalent valuations $v(.)$. The set $M_K$ is in bijection with the set of closed point in $\mathcal{C}$. Given a place $v\in{M_K}$, the residue field $k_v$ of $K$ at $v$ is a finite extension of $k$: the degree $n_v:= [k_v:k]$ of this extension will be called the degree of $v$.

This gives a normalization such that for any element $x\in{K}$, $x\neq0$, the following product formula holds  $$ \sum_{v\in{M_K}}n_v v(x)=0.$$ 

A divisor $I$ on the field $K$ is a formal sum $\displaystyle{\sum_{v\in{M_K}}a_v \cdot v}$ where $a_v\in{\mathbb{Z}}$ is zero for all but finitely many places $v$. We pose $$\deg(I)=\sum_{v\in{M_K}}n_v a_v.$$

We define the height on $K$ by $h(0)=0$ and for any non-zero $x\in{K}$, by $$h(x)=\sum_{v\in{M_K}}n_v \max\{0, -v(x)\}.$$

If we now consider $E$ to be an elliptic curve defined over the function field $K$, we define the N\'eron-Tate height on the group of rational points $E(K)$ with respect to the divisor $(O)$ on $E$ by $$\widehat{h}_E(P)=\frac{1}{2}\lim_{n\to\infty}\frac{1}{n^2}h(x([n]P)).$$

\subsection{Regulators of elliptic curves}

Let $K$ be a function field of transcendence degree one over its field of constants $k$.
Let $E/K$ be an elliptic curve over the field $K$. We assume that $E$ has trace zero. Let $m$ be the Mordell-Weil rank of $E(K)$, which is finite by the Lang-N\'eron theorem, see \cite{Co06} for instance. Let $\widehat{h}_{E}$ be the N\'eron-Tate height on $E$. Let $<.,.>$ be the associated bilinear form, given by $$<P,Q>=\frac{1}{2}\Big(\widehat{h}_{E}(P+Q)-\widehat{h}_{E}(P)-\widehat{h}_{E}(Q)\Big)$$ for any $P,Q\in{E(K)}$.

\begin{definition}\label{reg abvar}
Let $P_1, ..., P_{r}$ be a basis of the lattice $E(K)/E(K)_{\mathrm{tors}}$, where $E(K)$ is the Mordell-Weil group. The regulator of $E/K$ is defined by $$\mathrm{Reg}(E/K)= \det(<P_i,P_j>_{1\leq i,j\leq r}).$$ In the case $r=0$, the regulator is equal to $1$.
\end{definition}

We gather here three results needed for the sequel.

\begin{lemma}\label{success} (Lemma 3.1 of \cite{AHP})
Let $K$ be a function field of transcendence degree one over its field of constants $k$. Let $E$ be an elliptic curve over the field $K$. We assume that $E$ has trace zero. Let $r$ be the Mordell-Weil rank of $E(K)$. Assume $r\geq 1$. Let $\Lambda=E(K)/E(K)_{\mathrm{tors}}$ and for any $i\in\{1, ..., r\}$, let us denote the Minkowski $i$th-minimum of $(\Lambda, \sqrt{\widehat{h}_{E}})$ by $\lambda_i$. Then we have
\begin{equation}\label{Minkowski2}
 \lambda_1 \cdots \lambda_{r}\leq r^{{r/2}}  (\mathrm{Reg}(E/K))^{1/2}.
\end{equation}
\end{lemma}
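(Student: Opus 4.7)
The plan is to apply Minkowski's second theorem on successive minima to the Euclidean lattice arising from $(\Lambda, \widehat{h}_{E})$. Since $E/K$ has trace zero, the Lang-N\'eron theorem gives $\Lambda = E(K)/E(K)_{\mathrm{tors}}$ the structure of a free $\mathbb{Z}$-module of rank $r$, on which $\widehat{h}_{E}$ extends to a positive definite quadratic form. Hence $\Lambda$ sits as a full-rank lattice in the $r$-dimensional Euclidean space $V = \Lambda \otimes \mathbb{R}$ equipped with the inner product $\langle \cdot, \cdot \rangle$, and its covolume equals $\sqrt{\det(\langle P_i, P_j \rangle)} = \sqrt{\mathrm{Reg}(E/K)}$ for any $\mathbb{Z}$-basis $P_1, \ldots, P_r$ of $\Lambda$.

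I would then apply Minkowski's second theorem to $\Lambda$ with the Euclidean unit ball $B_r \subset V$ as the symmetric convex body: denoting by $\lambda_1 \leq \cdots \leq \lambda_r$ the successive minima of $\Lambda$ relative to the norm $\sqrt{\widehat{h}_{E}}$, this yields
\[ \lambda_1 \cdots \lambda_r \cdot \mathrm{vol}(B_r) \leq 2^r \sqrt{\mathrm{Reg}(E/K)}. \]
The factor $r^{r/2}$ in the claim then emerges from an elementary volume comparison: the cube $[-1/\sqrt{r},\, 1/\sqrt{r}]^r$ is inscribed in $B_r$, since each of its vertices has Euclidean norm exactly $1$, and therefore $\mathrm{vol}(B_r) \geq (2/\sqrt{r})^r = 2^r / r^{r/2}$. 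Substituting this bound cancels the factor $2^r$ on the right and produces the desired inequality.

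The main point to verify carefully is that $(\Lambda, \widehat{h}_{E})$ really is a Euclidean lattice, which rests on the positivity of $\widehat{h}_{E}$ on non-torsion points. In the function field setting this positivity is not automatic; it is precisely the trace zero hypothesis, together with the Lang-N\'eron theorem, that rules out a constant part of $E$ over which $\widehat{h}_{E}$ would become degenerate. Once this structural input is in place, the rest of the argument is a routine application of the geometry of numbers and does not require any further input specific to elliptic curves.
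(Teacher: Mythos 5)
Your proof is correct, and it follows the expected route: the paper itself gives no proof, simply importing the statement as Lemma 3.1 of \cite{AHP}, where the argument is precisely this application of Minkowski's second theorem to the Euclidean lattice $(\Lambda,\sqrt{\widehat{h}_E})$ of covolume $\sqrt{\mathrm{Reg}(E/K)}$. Your volume comparison (the cube $[-1/\sqrt{r},1/\sqrt{r}]^r$ inscribed in the unit ball, giving $\mathrm{vol}(B_r)\geq 2^r r^{-r/2}$) cleanly produces the stated constant $r^{r/2}$, and you correctly identify the trace-zero hypothesis plus Lang--N\'eron as the input making $\widehat{h}_E$ positive definite on $\Lambda$.
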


\begin{theorem}(Theorem 6.1 of \cite{AHP})\label{Langp}
Let $K=k(\mathcal{C})$ be a function field of characteristic $p>0$ and genus $g$. Let $E/K$ be an elliptic curve of discriminant $\Delta(E/K)$ and assume that the $j$-map of $E$ has inseparable degree $p^s$. Let $P\in{E(K)}$ be a non-torsion point. Then one has $$\widehat{h}_E(P)\geq p^{-2s} 10^{-15.5-23g} h(E),$$ where $h(E)=\frac{1}{12}\deg \Delta(E/K)$.
\end{theorem}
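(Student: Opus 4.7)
The approach is the function-field adaptation of the Hindry--Silverman strategy for Lang's lower bound on canonical heights, with the inseparability of the $j$-map handled by a Frobenius descent. First, I would decompose the N\'eron--Tate height into its local pieces: $\widehat{h}_E(P)=\sum_{v\in M_K} n_v \widehat{\lambda}_v(P)$, where $\widehat{\lambda}_v$ is the normalized N\'eron local canonical height at $v$. At a place $v$ of good reduction one has $\widehat{\lambda}_v(P)\geq 0$, so only the bad places can make trouble. At a bad place, $\widehat{\lambda}_v(P)$ depends (up to a non-negative integrality term $\tfrac{1}{2}\max\{0,-v(x(P))\}$) only on the image $c_v(P)\in\Phi_v=E(K_v)/E^0(K_v)$ of $P$ in the component group, and its minimum over $\Phi_v$ is controlled by $v(\Delta_v)$ through the Kodaira--N\'eron tables.

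Next, I would run a pigeonhole argument on the finite groups $\Phi_v$. Let $M$ be the exponent of $\bigoplus_{v\text{ bad}}\Phi_v$; then $[M]P$ lies in $E^0(K_v)$ at every bad place, so on $\widehat{\lambda}_v([M]P)$ one has a clean positivity lower bound coming from the identity-component formula (on $I_n$ fibers, this is the Tate-curve Bernoulli-polynomial expression evaluated at $0$; on additive fibers, the Silverman table gives it directly). Summing and using $\sum_v n_v v(\Delta_v)=12\,h(E)$, together with Riemann--Roch on $\mathcal{C}$ to bound the number of bad places in terms of $h(E)$ and $g$, yields $\widehat{h}_E([M]P)\geq c'(g)\,h(E)$, and dividing by $M^2$ transfers this back to $P$.

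The factor $p^{-2s}$ comes from a Frobenius descent. If the $j$-map has inseparable degree $p^s$, I would write $E$ as a Frobenius pullback of an elliptic curve $E_0/K$ whose $j$-map is separable; then $h(E)=p^s h(E_0)$, while Frobenius identifies rational points and scales canonical heights by $p^{-s}$. Applying the separable-$j$ bound to $E_0$ and transporting back to $E$ produces the $p^{-2s}$ in the final inequality. This descent is necessary, since otherwise iterating Frobenius twists on a constant elliptic curve would furnish counterexamples to any uniform bound depending only on $h(E)$ and $g$.

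The main obstacle I anticipate is not structural but the sharp numerical bookkeeping needed to pin down $10^{-15.5-23g}$. The three structural steps (local decomposition, pigeonhole on component groups, Frobenius descent) follow the now-standard Hindry--Silverman blueprint; the difficulty lies in bounding $M$ in terms of $h(E)$ without losing more than an exponential factor in $g$, and in controlling places that become additive after the Frobenius descent, since this is where the $23g$ contribution to the exponent is generated. A disciplined tally through the Kodaira tables, together with the Szpiro-type inequality relating $\deg\Delta$ to the conductor and genus, should produce the stated explicit constant.
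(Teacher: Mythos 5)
The first thing to say is that the paper contains no proof of this statement: it is imported verbatim as Theorem~6.1 of \cite{AHP}, and the shape of the constant $10^{-15.5-23g}$ betrays its origin in the Hindry--Silverman function-field bound (\cite{HiSi3}, which sits in the bibliography for exactly this reason). So your sketch can only be judged against the actual argument in \cite{AHP}/\cite{HiSi3}. Your outer layers are right: the reduction of the inseparable case to the separable one via a degree-$p^s$ purely inseparable isogeny is indeed how the $p^{-2s}$ arises, one factor of $p^{-s}$ from $h(E)=p^sh(E_0)$ and one from the scaling of the canonical height under the isogeny. (Two points of care there: you should push $P$ forward along the Verschiebung $E\to E_0$ rather than appeal to Frobenius ``identifying rational points'', since the Frobenius $E_0\to E_0^{(p^s)}$ need not be surjective on $K$-points; and $E$ is only a \emph{twist} of $E_0^{(p^s)}$, which perturbs the minimal discriminant and has to be accounted for.)

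The genuine gap is in your middle step. If $M$ is the exponent of $\bigoplus_v\Phi_v$, then the conclusion of your pigeonhole is $\widehat{h}_E(P)\geq c'(g)h(E)/M^2$, and $M$ is \emph{not} bounded in terms of $g$: a single fiber of type $I_n$ forces $M\geq n$, and $n=v(\Delta_v)$ can be as large as $12h(E)$ itself, so the factor $M^{-2}$ destroys the linear growth in $h(E)$ that the theorem asserts. No amount of bookkeeping through the Kodaira tables repairs this; the loss is in $h(E)$, not in $g$. The Hindry--Silverman device that replaces it is an averaging (or counting) argument over the small multiples $nP$, $1\leq n\leq N$, with $N$ controlled independently of $h(E)$: on an $I_n$ fiber the local height on the component indexed by $i$ is $\tfrac12 B_2(i/n)v(\Delta_v)$ with $B_2(t)=t^2-t+\tfrac16$, and while $B_2$ can be as negative as $-\tfrac1{12}$, its averages over the orbit $\{ni\bmod m\}$ are nearly non-negative; this is what lets one find a single small multiple of $P$ whose local heights are simultaneously not too negative, without ever multiplying by the exponent of the component group. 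That is the key idea your sketch is missing, and without it the stated inequality does not follow.
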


\begin{lemma}\label{ranks}
Let $K=k(\mathcal{C})$ be a function field of characteristic $p>0$ and genus $g$. Let $E/K$ be an elliptic curve over $K$. Let $r_{\mathrm{an}}$ denote the analytic rank of $E/K$ and let $r$ denote its algebraic rank over $K$. Then $r\leq r_{\mathrm{an}}$.
\end{lemma}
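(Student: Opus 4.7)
The plan is to deduce this inequality from Tate's geometric framework for the Birch and Swinnerton-Dyer conjecture over function fields of positive characteristic. The geometric object to work with is the Kodaira--N\'eron model: let $\pi \colon \mathcal{E} \to \mathcal{C}$ denote the minimal regular elliptic surface over $\mathbb{F}_q$ with generic fiber $E/K$. The key comparison is between the $L$-function $L(E/K,s)$ and the zeta function of the surface $\mathcal{E}$, and between the Mordell--Weil group of $E(K)$ and the N\'eron--Severi group of $\mathcal{E}$.

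First, I would use the Grothendieck--Ogg--Shafarevich formula (or rather its Leray-style refinement) to identify $L(E/K,s)$, up to bad Euler factors that contribute trivially at $s=1$, with the characteristic polynomial of Frobenius on $H^{1}_{\text{\'et}}(\bar{\mathcal{C}}, j_\ast R^{1}\pi_\ast \mathbb{Q}_\ell)$, where $j$ denotes the inclusion of the open subset of good reduction. The analytic rank $r_{\mathrm{an}}$ is then the multiplicity of $q^{-1}$ as an eigenvalue of Frobenius on this cohomology group, equivalently the multiplicity of the eigenvalue $q$ of Frobenius on a precisely identifiable $\mathrm{Gal}(\bar{\mathbb{F}}_q/\mathbb{F}_q)$-stable piece of $H^{2}_{\text{\'et}}(\bar{\mathcal{E}}, \mathbb{Q}_\ell(1))$.

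Next, I would invoke the Shioda--Tate formula, which reads
\begin{equation*}
\mathrm{rank}\, \mathrm{NS}(\mathcal{E}) = r + 2 + \sum_{v \in S}(m_v - 1),
\end{equation*}
where $S$ is the set of places of bad reduction of $E/K$ and $m_v$ is the number of irreducible components of the fiber of $\pi$ over $v$. The trace-zero hypothesis on $E$ is exactly what ensures that the trivial contributions on the right are accounted for purely by the fibral divisors and the zero section. The cycle class map induces an injection
\begin{equation*}
\mathrm{NS}(\mathcal{E}) \otimes \mathbb{Q}_\ell \hookrightarrow H^{2}_{\text{\'et}}(\bar{\mathcal{E}}, \mathbb{Q}_\ell(1))^{\mathrm{Frob}=1},
\end{equation*}
so $\mathrm{rank}\, \mathrm{NS}(\mathcal{E})$ unconditionally bounds the multiplicity of $q$ as a Frobenius eigenvalue on $H^{2}_{\text{\'et}}(\bar{\mathcal{E}}, \mathbb{Q}_\ell(1))$.

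Matching up the two sides and subtracting the contributions $2 + \sum_{v \in S}(m_v-1)$, which correspond to the trivial summands of $R^{1}\pi_\ast\mathbb{Q}_\ell$ and are not seen by $L(E/K,s)$, yields $r \leq r_{\mathrm{an}}$. The main obstacle is the careful bookkeeping required to match the pieces of the second cohomology of $\mathcal{E}$ with the factors of $L(E/K,s)$ and the summands on the right of Shioda--Tate; but this is precisely what Tate carries out in his work on the geometric analogue of BSD, so in the write-up the argument can be presented as a direct citation rather than reproved.
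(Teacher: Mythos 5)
Your proposal is essentially the paper's own proof: the paper disposes of this lemma in one line by citing Theorem 5.2 of Tate's Bourbaki seminar on the geometric analogue of BSD, and the argument you sketch (comparison of $L(E/K,s)$ with Frobenius acting on the relevant piece of $H^{2}$ of the Kodaira--N\'eron model, the cycle class map bounding the Frobenius-fixed part by $\mathrm{rank}\,\mathrm{NS}$, and Shioda--Tate to peel off the trivial contributions) is precisely the content of that citation, which you yourself note can be replaced by a direct reference. The substance is correct, up to minor normalization slips in which eigenvalue ($q$ versus $q^{-1}$ versus $1$ after the Tate twist) is relevant on which cohomology group, and the remark that the lemma as stated does not actually assume trace zero.
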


\begin{proof}
This is a direct consequence of Theorem 5.2 page 436 of \cite{Tate}.
\end{proof}

\section{Regulators of elliptic curves over function fields of positive characteristic}\label{ellFFp}

Let us start with a useful lemma, which is an explicit version of Proposition 6.9 page 463 in \cite{brumer}. Inequality (\ref{weakeasy}) is weaker than inequality (\ref{long}), but easier to manipulate. Brumer's work \cite{brumer} provides a bound on the analytic rank. To deduce the control on the algebraic rank we use Lemma \ref{ranks}. 

\begin{lemma} \label{BruPaz}
Let $K=\mathbb{F}_q(\mathcal{C})$ be a function field of characteristic $p\geq 5$ and genus $g$. Let $E$ be an elliptic curve over $K$. Let $n_E$ be the degree of the conductor of $E$ and let $r$ denote the rank of $E(K)$. Assume $n_E>1$. The following inequality holds:
\begin{equation}\label{long}
r\leq \frac{n_E}{2\log n_E}\log q + \frac{n_E}{(\log n_E)^2}4\sqrt{q}(\log q)^2+\frac{7}{2}+\frac{\log q}{\sqrt{q}\log n_E}\Big[ (2g-2)\sqrt{q}+ 20g+17 \Big],
\end{equation}
and leads to, as $q\geq 5$, 
\begin{equation}\label{weakeasy}
r\leq \frac{n_E}{\log n_E} \sqrt{q} (\log q)^2 (5g+9).
\end{equation}
\end{lemma}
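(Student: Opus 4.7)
The plan is to derive inequality \eqref{long} as an explicit version of Brumer's bound on the analytic rank $r_{\mathrm{an}}$ of $E/K$, and then to pass to the algebraic rank $r$ via Lemma \ref{ranks}. The weaker inequality \eqref{weakeasy} will then follow by majorizing \eqref{long} term by term using only $q\geq 5$, $g\geq 0$, and $n_E\geq 2$.

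First, I would recall Brumer's setup. The explicit formula for $L(E/K,s)$ expresses a sum over its zeros of a test function in terms of a sum over places of $K$ involving the local Frobenius traces $a_v$, plus terms depending on the conductor degree $n_E$ and the genus $g$ of $\mathcal{C}$. For a non-negative test function whose Fourier transform is non-negative at the origin, the sum over zeros is bounded below by $r_{\mathrm{an}}$, while the right-hand side can be bounded above. Proposition 6.9 of \cite{brumer} carries out exactly this argument, leaving a free real cutoff parameter $x$. Keeping track of every constant rather than working asymptotically, one obtains a bound of the shape
\begin{equation*}
r_{\mathrm{an}} \;\leq\; \frac{n_E}{2x}\log q + \text{(error terms in $x,q,g,n_E$)}.
\end{equation*}

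Second, I would estimate the trace sum using the Weil bound $|a_v|\leq 2\sqrt{q^{n_v}}$, which produces a contribution of order $\sqrt{q}(\log q)^2 n_E / x^2$, while the contributions of $g$ and of the boundary terms of the test function stay under control. Choosing $x=\log n_E$ balances the two dominant terms and, after carefully collecting the constants, yields exactly the four summands in \eqref{long}: the main term $\tfrac{n_E}{2\log n_E}\log q$, the Weil-bound term $\tfrac{n_E}{(\log n_E)^2}4\sqrt{q}(\log q)^2$, the constant $\tfrac{7}{2}$ coming from the value of the test function at zero, and the $g$-dependent error. Applying Lemma \ref{ranks} then transfers this bound from $r_{\mathrm{an}}$ to $r$.

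Finally, to derive \eqref{weakeasy}, I would check that each of the four summands of \eqref{long} is dominated by $\tfrac{n_E}{\log n_E}\sqrt{q}(\log q)^2(5g+9)$. Using $q\geq 5$, so that $\log q \leq \sqrt{q}(\log q)^2$, the first summand is clearly controlled; the second already has the required shape with a harmless factor $4/\log n_E$; and the constant $\tfrac{7}{2}$ together with the bracketed $g$-dependent error are absorbed thanks to $n_E/\log n_E \geq 2/\log 2$ and the generous coefficient $5g+9$. The main obstacle is the bookkeeping in the first two steps: turning Brumer's asymptotic proposition into the fully explicit inequality \eqref{long} requires making every implied constant precise, with a careful uniform treatment of the contributions of bad places and of the base curve $\mathcal{C}$; the rest is elementary once the explicit formula has been written out with constants in place.
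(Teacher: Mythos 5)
Your strategy coincides with the paper's: make Brumer's Proposition 6.9 explicit via the explicit formula with a non-negative (Fej\'er-type) test function, pass from the analytic rank to the algebraic rank by Lemma \ref{ranks}, and then obtain \eqref{weakeasy} from \eqref{long} by elementary majorizations using $q\geq 5$. Your last step --- checking that each summand of \eqref{long} is dominated by $\frac{n_E}{\log n_E}\sqrt{q}(\log q)^2(5g+9)$ --- is exactly how the paper concludes.

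The gap is that the entire content of \eqref{long} lies in its explicit constants, and your write-up defers them to ``carefully collecting the constants'': nothing in the proposal actually produces the coefficients $4\sqrt{q}(\log q)^2$, $\tfrac{7}{2}$ or $20g+17$, so as written the lemma is asserted rather than proved. Concretely, what has to be done is the following. The free parameter in Brumer's truncated explicit formula is a positive \emph{integer} $Y$, the length of the truncation $\sum_{m=1}^{Y}U_m(E,f)$, not a real cutoff $x$; one takes $f=F_Y$ the Fej\'er kernel, whose non-negativity lets one discard the sum over zeros, bounds $\vert U_1\vert$, $\vert U_2\vert$ and $\sum_{m\geq3}\vert U_m\vert$ separately, and then chooses $Y=\lceil 2\log n_E/\log q\rceil$, using $1/Y\leq \log q/(2\log n_E)$ and $q^{Y/2}\leq n_E\,q$. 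The choice of the ceiling rather than the floor is precisely what removes Brumer's hypothesis that $n_E$ be large compared to $q$, a point your sketch (with its free real parameter and the choice $x=\log n_E$) does not address. Two smaller inaccuracies: the constant $\tfrac{7}{2}$ does not come from ``the value of the test function at zero'' --- that value is $F_Y(0)=Y$ and it is divided out --- but from the additive residual term $\frac{4g+2}{1-q^{-1}}+1$ after using $q\geq5$; and the term $\frac{n_E}{(\log n_E)^2}4\sqrt{q}(\log q)^2$ arises from the exponential growth $q^{Y/2}\leq n_E\,q$ in the bound for $U_1(E,F_Y)$, not directly from a Weil-bound sum of order $\sqrt{q}(\log q)^2 n_E/x^2$. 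None of this is fatal --- you are on the paper's path --- but the quantitative heart of the lemma remains to be carried out.
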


Note that we do not assume that $n_E$ is large when compared to $q$, in contrast with Proposition 6.9 page 463 of \cite{brumer}.

\begin{proof}
We follow closely the proof of Proposition 6.9 page 463 in \cite{brumer}. Let us denote by $\mathcal{Z}$ the set of $\theta$ such that $1+i\theta/\log q$ is a zero of the $L$-function of the elliptic curve $E$, and such that $0\leq \theta<2\pi$. For any trigonometric polynomial $f$ with Fourier coefficients denoted $c(n)$, we state the explicit formula (6.7) page 462 in \cite{brumer}, for a positive integer parameter $Y$ to be fixed later:
\begin{equation}\label{explicit}
r_{\mathrm{an}} f(0)+\sum_{\theta\in\mathcal{Z}}f(\theta)=c(0)(n_E+4g-4)+2\sum_{m=1}^{Y}U_m(E,f),
\end{equation}
where the $U_m(E,f)$ are defined in (6.6) page 462 in \cite{brumer} and satisfy the following inequality, uniformly in $f$ (there is a term $\beta_K$ in the original formula, note that we used $\beta_K=(2g+1)(1-q^{-1})^{-1}$, as given in Proposition 6.3 page 461 of \cite{brumer}):
\begin{equation}\label{m>2}
\sum_{m=3}^{+\infty}\vert U_m(E,f)\vert\leq \frac{2}{\sqrt{q}(1-q^{-1/2})^2}+\frac{(4g+2)(1-q^{-1})^{-1}}{(q-1)(1-q^{-1/2})},
\end{equation}

\noindent and if we consider (as in (6.5) page 465) the F\'ejer kernel given by $$F_Y(\theta)=\frac{(\sin \frac{1}{2}Y\theta )^2}{Y (\sin\frac{1}{2}\theta)^2},$$
for the specific choice $f=F_Y$, we have the inequality
\begin{equation}\label{m=2}
\vert U_2(E,F_Y)\vert \leq \frac{Y}{2}+\frac{4g+2}{(\sqrt{q}-1)(1-q^{-1})},
\end{equation}

\noindent and the inequality

\begin{equation}\label{m=1}
\vert U_1(E,F_Y)\vert\leq \frac{2q^{Y/2}}{\sqrt{q} Y (1-q^{-1/2})^2} + \frac{(2g+1)}{(1-q^{-1})} Y.
\end{equation}
Following Brumer we fix $f=F_Y$ in equation (\ref{explicit}), we get $f(0)=Y$ and $c(0)=1$, and because the F\'ejer kernel is \textbf{non-negative}\footnote{The author remembers attending a course in functional analysis of Jean-Michel Morel at ENS Cachan in 2002, where one needed to compare different kernels in Fourier theory. The F\'ejer kernel will always be remembered as one of the most important, because it is non-negative, this is useful again in this situation!}, the combination of (\ref{explicit}) with (\ref{m>2}), (\ref{m=2}), (\ref{m=1}) leads to 

\begin{equation}\label{Y}
\begin{tabular}{ll}
$r \leq r_{\mathrm{an}}\leq$ & $\displaystyle{\frac{n_E+4g-4}{Y}+\frac{4q^{Y/2}}{Y^2\sqrt{q}(1-q^{-1/2})^2}+\frac{4g+2}{1-q^{-1}}+1+\frac{(8g+4)}{Y(\sqrt{q}-1)(1-q^{-1})}}$\\
\\
$$ & $\displaystyle{+\frac{4}{Y\sqrt{q}(1-q^{-1/2})^2}+\frac{(8g+4)(1-q^{-1})^{-1}}{Y(q-1)(1-q^{-1/2})}}.$\\
\\
\end{tabular}
\end{equation}

For any $n_E>1$, we may now fix $Y=\lceil \frac{2\log n_E}{\log q} \rceil>0$ and use $\frac{1}{Y}\leq \frac{\log q}{2\log n_E}$ and $q^{Y/2}\leq n_E\, q$ to obtain\footnote{Taking $\lfloor\cdot\rfloor$ instead of $\lceil\cdot\rceil$ when choosing $Y$ is a valid option if $n_E$ is assumed big when compared to $q$.} in (\ref{Y})
\\

\begin{tabular}{ll}
$r \leq$ & $\displaystyle{\frac{n_E}{2\log n_E}\log q+\frac{n_E \sqrt{q}(\log q)^2}{(\log n_E)^2(1-q^{-1/2})^2}+\frac{4g+2}{1-q^{-1}}+1}$\\
\\
$$ & $\displaystyle{+\frac{\log q}{\log n_E}\Big[ \frac{4g+2}{(\sqrt{q}-1)(1-q^{-1})} + \frac{2}{\sqrt{q}(1-q^{-1/2})^2} + \frac{(4g+2)(1-q^{-1})^{-1}}{(q-1)(1-q^{-1/2})} +2g-2\Big]},$\\
\\
\end{tabular}

\noindent and with $q\geq5$ we obtain
\begin{equation}
r\leq \frac{n_E}{2\log n_E}\log q + \frac{n_E}{(\log n_E)^2}4\sqrt{q}(\log q)^2+\frac{7}{2}+\frac{\log q}{\sqrt{q}\log n_E}\Big[ 20g+17 +(2g-2)\sqrt{q} \Big].
\end{equation}

%
%
%
%
%

\end{proof}

We can now give the proof of Theorem \ref{reg>rank}.

\begin{proof} 
If $r=0$ the result is obvious, we may thus assume that $r\geq1$. We start by combining Lemma \ref{success} and Theorem \ref{Langp} to obtain
\begin{equation}\label{success+Langp}
\mathrm{Reg}(E/K)\geq \frac{1}{r^r} \Big( p^{-2s} 10^{-15.5-23g} h(E)\Big)^r.
\end{equation}

We now want to estimate the denominator by bounding the algebraic rank $r$ from above: one uses Lemma \ref{BruPaz} (valid when $p\geq 5$ and $n_E>1$):
\begin{equation}\label{Brumer}
r\leq \frac{n_E}{\log n_E} \sqrt{q} (\log q)^2 (5g+9).
\end{equation}

\noindent Now, as $n_E\leq 12 h(E)$ and as $x\mapsto x(\log x)^{-1}$, for $x>e$ is a well defined increasing function, one deduces from (\ref{Brumer}) that for $n_E>e$ 

\begin{equation}
r\leq \frac{12h(E)}{\log 12h(E)} \sqrt{q} (\log q)^2 (5g+9),
\end{equation}
which leads to 
\begin{equation}
\mathrm{Reg}(E/K)\geq \Big(\frac{\log 12h(E)}{12h(E) \sqrt{q} (\log q)^2 (5g+9)}\Big)^{r} \Big( p^{-2s} 10^{-15.5-23g} h(E)\Big)^r 
\end{equation}
and finally
\begin{equation}
\mathrm{Reg}(E/K) \geq  \Big( c_0 \log 12h(E)\Big)^r,
\end{equation}
where $c_0=\Big(p^{2s} 12 \sqrt{q} (\log q)^2 (5g+9)10^{15.5+23g}\Big)^{-1}$. We also need to treat the case $n_E\leq e$: we may use the easy bound $r\leq n_E+4g-4$, which gives in particular $r\leq 4g-1$. Inject this in (\ref{success+Langp}) to obtain

\begin{equation}
\mathrm{Reg}(E/K)\geq  \Big( \frac{1}{4g-1} p^{-2s} 10^{-15.5-23g} h(E)\Big)^r\geq  \Big( c_0 \log 12h(E)\Big)^r,
\end{equation}
and the same explicit value of $c_0$ is valid. This concludes the proof.
\end{proof}

We will now close the discussion with the proof of Corollary \ref{Northcott}.

\begin{proof}
We split into two cases: in Theorem \ref{reg>rank}, either $c_0\log 12h(E)\leq 1$, in that case $p^s$ bounded implies a bounded height, or $c_0\log 12h(E)> 1$: in that case, as soon as the rank $r$ is positive and as long as $s$ is bounded from above, a bounded regulator implies a bounded height by inequality (\ref{minoration}). In both cases, apply \cite{MB85} Th\'eor\`eme 4.6 page 236, which proves that a bounded height implies finiteness, as the constant field is a finite field here.
\end{proof}

\end{document}